\def\NAT@def@citea{\def\@citea{\NAT@separator}}
\theoremstyle{plain}
\newtheorem{theorem}{Theorem}[section]
\theoremstyle{definition}
\theoremstyle{remark}
\newcommand{\ie}{{\it i.e.}}
\newcommand{\eg}{{\it e.g.}}
\def\cJ{\mathcal J}
\def\cK{\mathcal K}
\def\cN{\mathcal N}
\def\cP{\mathcal P}
\def\cQ{\mathcal Q}
\newcommand{\bone}{{\bf 1}}
\newcommand{\bbE}{{\mathbb E}}
\newcommand{\bbR}{{\mathbb R}}
\newcommand{\pr}{\mathbb{P}}
\newcommand{\bc}{\begin{center}}
\newcommand{\ec}{\end{center}}
\newcommand{\be}{\begin{equation}}
\newcommand{\ee}{\end{equation}}
\newcommand{\ba}{\begin{array}}
\newcommand{\ea}{\end{array}}
\newcommand{\bean}{\setlength\arraycolsep{2pt}\begin{eqnarray*}}
\newcommand{\eean}{\end{eqnarray*}}
\newcommand{\bea}{\setlength\arraycolsep{2pt}\begin{eqnarray}}
\newcommand{\eea}{\end{eqnarray}}
\newcommand{\ben}{\begin{enumerate}}
\newcommand{\een}{\end{enumerate}}
\newcommand{\bed}{\begin{itemize}}
\newcommand{\eed}{\end{itemize}}
\DeclareMathOperator*{\argmax}{argmax}
\DeclareMathOperator*{\argmin}{argmin}
\def\half{\hbox{$1\over2$}}
\begin{document}

\articletype{}

\title{An EM based Iterative Method for Solving	Large Sparse Linear Systems}

\author{
\name{Minwoo Chae\textsuperscript{a}\thanks{CONTACT Minwoo Chae. Email: minwooo.chae@gmail.com} and Stephen G. Walker\textsuperscript{b}}
\affil{\textsuperscript{a}Department of Mathematics, Applied Mathematics and Statistics, Case Western Reserve University, Cleveland, OH, USA; \textsuperscript{b}Department of Mathematics, The University of Texas at Austin, Austin, TX, USA}
}

\maketitle

\begin{abstract}
We propose a novel iterative algorithm for solving a large sparse linear system. The method is based on the EM algorithm. If the system has a unique solution, the algorithm guarantees convergence with a geometric rate. Otherwise, convergence to a minimal Kullback--Leibler divergence point is guaranteed. The algorithm is easy to code and competitive with other iterative algorithms.
\end{abstract}

\begin{keywords}
EM algorithm; indefinite matrix; iterative method; Kullback--Leibler divergence; sparse linear system
\end{keywords}

\section{Introduction} \label{sec:intro}

An important problem is to find a solution to a system of linear equations
\be\label{eq:ls}
	Ax = b,
\ee
where $A = (a_{ij})$ is an $m_1 \times m_2$ matrix and $b$ is an $m_1$-dimensional vector.
We mainly consider the square matrix where $m_1 = m_2 = m$, but the theory and computations presented in the paper works for general $A$.
If $A$ is nonsingular with inverse matrix $A^{-1}$, there exists a unique solution to \eqref{eq:ls}, denoted by $x^* = A^{-1} b$.
When the dimension $m$ is large, however, finding the inverse matrix $A^{-1}$ is computationally unfeasible.
As alternatives, a number of iterative methods have been proposed to find a sequence $(x_n)$ approximating $x^*$, and they are often implementable when $A$ is sparse, that is, most $a_{ij}$'s are zero.
For reviews of these iterative methods within a unified framework, we refer to the monograph \cite{saad2003iterative}.
Many sources of such large sparse linear systems come from the discretization of a partial differential equation; see Chapter 2 of \cite{saad2003iterative}.
Within statistical applications, sparse design matrices have been considered in \cite{kennedy1980gentle, koenker2003sparsem} and an algorithm sampling high-dimensional Gaussian random variables with sparse precision matrices has been developed in \cite{aune2013iterative}.

We start with a brief introduction of the most widely used iterative methods for solving \eqref{eq:ls}.
Current iterative methods are coordinate-wise updating algorithms.
Two of the most well-known methods are \emph{Jacobi} and \emph{Gauss--Seidel} which can be found in most standard textbooks.
Given $x_n = (x_{n,j})$, the Jacobi and Gauss--Seidel methods update $x_{n+1}$ via
\bean
	x_{n+1,j} &=& \frac{1}{a_{jj}} \left(b_j - \sum_{i\neq j} a_{ji} \,x_{n,i} \right)
\eean
and
\bean
	x_{n+1,j} &=& \frac{1}{a_{jj}} \left(b_j - \sum_{i=1}^{j-1} a_{ji} \, x_{n+1,i} - \sum_{i=j+1}^{m} a_{ji} \,x_{n,i}\right),
\eean
respectively.
Although they are simple and convenient, both of them are restrictive in practice because $(x_n)$ is not generally guaranteed to converge to $x^*$; see Chapter 4 of \cite{saad2003iterative}.

The \emph{Krylov subspace methods}, which are based on the \emph{Krylov subspace} of $\bbR^m$,
$$
	\cK_n = {\rm span} \Big\{r_0, Ar_0, A^2 r_0, \ldots, A^{n-1}r_0 \Big\},
$$
are the dominant approaches, where $x_0$ is an initial guess and $r_0 = b-Ax_0$.
Under the assumption that $A$ is sparse, matrix-vector multiplication is cheap to compute, so it is not difficult to handle $\cK_n$ even when $m$ is very large.
If $A$ is symmetric and positive definite (SPD), the standard choice for solving \eqref{eq:ls} is the \emph{conjugate gradient method} (CG; \cite{hestenes1952methods}).
This is an \emph{orthogonal projection method}, see Chapter 5 of \cite{saad2003iterative}, onto $\cK_n$, finding $x_n \in x_0 + \cK_n$ such that $b - A x_n \perp \cK_n$.
To be more specific, recall that two vectors $u,v\in\bbR^m$ are called \emph{$A$-conjugate} if $u^T A v = 0$.
If $A$ is symmetric and positive definite, then this quadratic form defines an inner product, and there is a basis for $\bbR^m$ consisting of mutually $A$-conjugate vectors.
The CG method sequentially generates mutually $A$-conjugate vectors $p_1, p_2, \ldots$, and approximates $x^* = \sum_{j=1}^m \alpha_j p_j$ as $x_n = \sum_{j=1}^n \alpha_j p_j$, where $\alpha_j = p_j^T b / p_j^T A p_j$.
Using the symmetry of $A$, the computation can be simplified as in Algorithm \ref{alg:cg}.
Here $\|\cdot\|_q$ denotes the $\ell_q$-norm on $\bbR^m$.

\begin{algorithm}
\caption{Conjugate gradient method for SPD $A$} \label{alg:cg}
\begin{algorithmic}[1]
\State {\bf Input}: $A, b, x_0$ and $\epsilon_{\rm tol} > 0$
\State $j\gets 0$
\State $r_0 \gets b - A x_0$
\State $p_0 \gets r_0$
\While{$\|r_j\|_2 > \epsilon_{\rm tol}$}
	\State $\alpha_j \gets r_j^T r_j / p_j^T Ap_j$ \label{step:cg1}
	\State $x_{j+1} \gets x_j + \alpha_j p_j$
	\State $r_{j+1} \gets r_j - \alpha_j Ap_j$
	\State $\beta_j \gets r_{j+1}^T r_{j+1} / r_j^T r_j$
	\State $p_{j+1} \gets r_{j+1} + \beta_j p_j$ 
	\State $j \gets j+1$\label{step:cg2}
\EndWhile
\State \Return $x_j$
\end{algorithmic}
\end{algorithm}

For a general matrix $A$, the \emph{generalized minimal residual method} (GMRES; \cite{saad1986gmres}) is the most popular.
It is an \emph{oblique projection method}, see Chapter 5 of \cite{saad2003iterative}, which finds $x_k \in \cK_k$ satisfying $b - A x_k \perp A\cK_k$, where $A\cK_k = \{Av: v\in\cK_k\}$.
When implementing GMRES, \emph{Arnoldi's method} \cite{arnoldi1951principle} is applied for computing an orthonormal basis of $\cK_k$.
The method can be written as in Algorithm \ref{alg:gmres}.
For a given initial $x_0$, let us write the result of Algorithm \ref{alg:gmres} as $G_k(x_0)$.
Since the computational cost of Algorithm \ref{alg:gmres} is prohibitive for large $k$, a restart version of GMRES$(k)$, defined as $x_{n+1} = G_k(x_n)$, is applied with small $k$.
It should be noted that the generalized conjugate residual (GCR; \cite{elman1982iterative}), ORTHODIR \cite{young1980generalized} and Axelsson's method \cite{axelsson1980conjugate} are mathematically equivalent to GMRES; but it is known in \cite{saad1986gmres} that GMRES is computationally more efficient and reliable.
Further connections between these methods are discussed in \cite{saad1985conjugate}.
Convergence is guaranteed, but there are restrictions; see Section \ref{sec:comparison}.

\begin{algorithm}
\caption{GMRES$(k)$} \label{alg:gmres}
\begin{algorithmic}[1]
\State {\bf Input}: $A, b, x_0$ and $\epsilon_{\rm tol} > 0$
\State $\beta \gets \|b - A x_0\|_2$
\State $v_1 \gets (b-Ax_0)/\beta$
\For{$j=1, \ldots, k$} \label{step:gmres0}
	\State $w_j \gets Av_j$ \label{step:gmres1}
	\For {$i=1, \ldots, j$}
		\State $h_{ij} \gets w_j^T v_i$ \label{step:gmres2}
		\State $w_j \gets w_j - h_{ij} v_i$ \label{step:gmres3}
	\EndFor
	\State $h_{j+1,j} \gets \|w_j\|_2$ \label{step:gmres4}
	\If {$h_{j+1,j} < \epsilon_{\rm tol}$ } set $k\gets j$ and {\bf break}\EndIf
	\State $v_{j+1} \gets w_j / h_{j+1,j}$ \label{step:gmres5}
\EndFor
\State $y_k \gets \argmin_y \|\beta e_1 - H_k y\|_2$, where $H_k = (h_{ij})_{i \leq k+1, j \leq k}$ and $e_1 = (1, 0, \ldots, 0)^T$ \label{step:gmres6}
\State $x_k = x_0 + V_k y_k$, where $V_k = (v_1, \ldots, v_k) \in \bbR^{m\times k}$  \label{step:gmres7}
\State \Return $x_k$
\end{algorithmic}
\end{algorithm}

The \emph{minimum residual method} (MINRES; \cite{paige1975solution}) can be understood as a special case of GMRES when $A$ is a symmetric matrix.
In this case, Arnoldi's method (steps \ref{step:gmres0}-\ref{step:gmres5}) in Algorithm \ref{alg:gmres} can be replaced by the simpler \emph{Lanczos algorithm} \cite{lanczos1950iteration}, described in Algorithm \ref{alg:lanczos}, where $\alpha_j = h_{jj}$ and $\beta_j = h_{j-1, j}$.

\begin{algorithm}
\caption{Lanczos algorithm} \label{alg:lanczos}
\begin{algorithmic}[1]
\State $\beta_1 \gets 0$
\State $v_0 \gets 0$
\For{$j=1, \ldots, k$} 
	\State $w_j \gets A v_j - \beta_j v_{j-1}$
	\State $\alpha_j \gets w_j^T v_j$
	\State $w_j \gets w_j - \alpha_j v_j$
	\State $\beta_{j+1} \gets \|w_j\|_2$
	\If {$\beta_{j+1} < \epsilon_{\rm tol}$ } set $k\gets j$ and {\bf break}\EndIf
	\State $v_{j+1} \gets w_j / \beta_{j+1}$
\EndFor
\end{algorithmic}
\end{algorithm}

In summary, standard approaches for solving \eqref{eq:ls} are (i) CG for SPD $A$; (ii) MINRES for symmetric $A$; and (iii) GMRES for general $A$.
However, convergence is not guaranteed for GMRES.
As an alternative, one can solve the normal equation
\be \label{eq:normal}
	A^T Ax = A^T b,
\ee
where iterative algorithms guarantee convergence.
However, this approach is often avoided in practice because the matrix $A^T A$ is less well conditioned than the original $A$; see Chapter 8 of \cite{saad2003iterative}.
There are a large number of other general approaches, and many of them are variations and extensions of Krylov subspace methods.
Each method has some appealing properties, but it is difficult in general to analyze them theoretically.
See Chapter 7 of \cite{saad2003iterative}.
Also, there are some algorithms which are devised to solve a structured linear system \cite{ho2012fast, golub2003solving, bostan2008solving}.
To the best of our knowledge, however, there is no efficient iterative algorithm that can solve an arbitrary sparse linear system.
In particular, the most popular, GMRES, often has quite strange convergence properties, see \cite{embree2003tortoise} and \cite{greenbaum1996any}, making the algorithm difficult to use in practice.

In this paper, we propose an iterative method which guarantees convergence for an arbitrary linear system.
Under the assumption that $A, b$ and $x^*$ are nonnegative, the basic algorithm is known in \cite{vardi1993image} as an EM algorithm with an infinite number of observations.
Although the EM algorithm satisfies certain monotonicity criteria, see \cite{dempster1977maximum}, a detailed convergence analysis is omitted in \cite{vardi1993image}.
Independently from \cite{vardi1993image}, \citet{walker2017iterative} studied the same algorithm viewing it as a Bayesian updating algorithm and provided the proof for convergence.
The innovation of this paper is to extend the algorithm to general linear systems where $A, b$ and $x^*$ are not necessarily nonnegative, and to provide more detailed convergence analysis.
In particular, our convergence results include inconsistent systems, \ie\ the linear system \eqref{eq:ls} has no solution.
In this case, it is shown that $(x_n)$ converges to a certain minimal Kullback--Leibler divergence point.

The algorithm is easy to implement and requires small storage.
The proposed algorithm can serve as a suitable alternative to the Krylov subspace methods.
The new algorithm and its theoretical properties are studied in Section \ref{sec:main}.
A comparison to existing methods is provided in Section \ref{sec:comparison} and concluding remarks are given in Section \ref{sec:discussion}.

\subsection*{Notation}
Every vector such as $b$ and $x$ are column vectors, and components are denoted with a subscript, \eg\ $b=(b_i)$.
Dots in subscripts present the summation in those indices, \ie\ $a_{\cdot j} = \sum_{i=1}^m a_{ij}$.
The $j$th column of $A$ is denoted by $a^{(j)}$.
For $X$, which may be a vector or a matrix, is said to be nonnegative (positive, resp.) and denoted $X \geq 0$ ($x > 0$, resp.) if each component of $X$ is nonnegative (positive, resp.).
The number of nonzero elements of $X$ is denoted $\cN_X$.

\section{An iterative algorithm with guaranteed convergence}
\label{sec:main}

\subsection{Algorithm for solving nonnegative systems}

Assume that $A$ is a nonsingular square matrix and $A, b$ and $x^*$ are nonnegative.
In this case, \citet{vardi1993image} and \citet{walker2017iterative} proposed the iterative algorithm
\be\label{eq:bayes-update}
	x_{n+1,j} = \frac{x_{n,j}}{a_{\cdot j}} \sum_{i} a_{ij} \frac{b_i}{b_{n,i}}, \quad n\geq 0,
\ee
where $b_n = (b_{n,i}) = Ax_n$ and $x_0 \geq 0$ is an initial guess.
To briefly introduce the main idea, assume that $b,x$ and $a^{(j)}$ are probability vectors, \ie\ a vector with non-negative entries that sum to one.

Now consider discrete random variables $I$ and $J$ whose joint distribution is given by
$$
	\pr(J=j) = x_j^* \quad {\rm and} \quad \pr(I=i | J=j) = a_{ij}.
$$
Then, the marginal probability of $I$ is 
$$
	\pr(I=i) = \sum_j \pr(I=i|J=j) \pr(J=j) = \sum_j x_j^* a_{ij} = b_i.
$$
Note that 
\be\label{eq:bayes-prob}
	\pr(J=j|I=i) = \frac{\pr(J=j) \pr(I=i|J=j)}{\sum_{j^\prime}\pr(J=j^\prime) \pr(I=i|J=j^\prime)} = \frac{x_j^* a_{ij}}{\sum_{j^\prime} a_{ij^\prime} x_{j^\prime}^*}
\ee
by Bayes theorem.

\citet{vardi1993image} constructed the iteration \eqref{eq:bayes-update} through an EM algorithm.
With known $A$ and $b$, consider the problem of estimating $x^*$ based on the observation $I_1, \ldots, I_N$, where $(I_k, J_k)_{1 \leq k \leq N}$ are i.i.d. copies of $(I,J)$.
Since we do not directly observe $J_1, \ldots, J_N$, a standard method to find a maximum likelihood estimator is the EM algorithm.
Let $N_{ij}$ be the number of $k$'s such that $(I_k, J_k) = (i,j)$.
Then, the complete log-likelihood is
$$
	L^c(x) = \sum_{i,j} N_{ij} \log a_{ij} + \sum_j N_{\cdot j} \log x_j,
$$
so we have
$$
	Q(x|x_n) \stackrel{\rm def}{=} \bbE_{x_n}[L^c(x) | I_1, \ldots, I_k]
	= C + \sum_j \bbE_{x_n}[N_{\cdot j}|I_1, \ldots, I_n] \log x_j,
$$
where $C$ does not depend on $x$.
Thus, the EM iteration $x_{n+1} = \argmax_x Q(x|x_n)$ is given as
$$
	x_{n+1,j} =  \frac{\bbE_{x_n}[N_{\cdot j}|I_1, \ldots, I_n]}{\sum_{j'}\bbE_{x_n}[N_{\cdot j'}|I_1, \ldots, I_n]}.
$$
Since
$$
	\bbE_{x_n}[N_{ij}|I_1, \ldots, I_n] = 
	\frac{x_{n,j} a_{ij}}{\sum_{j^\prime} a_{ij^\prime} x_{n, j^\prime}} N_{i\cdot}
$$
by \eqref{eq:bayes-prob}, we have
$$
	x_{n+1,j} = x_{n,j} \sum_i \frac{a_{ij}}{\sum_{j^\prime} a_{ij^\prime} x_{j^\prime}} \frac{N_{i\cdot}}{N_{\cdot\cdot}}.
$$
Note that $N_{i\cdot}/N_{\cdot\cdot} \rightarrow b_i$ almost surely as $N \rightarrow \infty$, reducing the iteration \eqref{eq:bayes-update}.
Therefore, \eqref{eq:bayes-update} can be interpreted as an EM algorithm with infinite number of observations.

\citet{walker2017iterative} viewed the iteration \eqref{eq:bayes-update} as Bayesian updating.
Given a prior $x_n$ and an observation $I$, the posterior update of $x_{n,j}$ is given by
$$
	\frac{x_{n,j} a_{Ij}}{\sum_{j'} a_{Ij'} x_{n,j'}}
$$
by \eqref{eq:bayes-prob}.
Since we do not have data, a natural choice is to use average update
$$
	x_{n+1,j} = \sum_i \frac{x_{n,j} a_{ij}}{\sum_{j'} a_{ij'} x_{n,j'}} b_i
$$
which is exactly \eqref{eq:bayes-update}.

The update \eqref{eq:bayes-update} can also be understood as a fixed-point iteration.
From the identity
\bean
	x_j^* = \pr(J=j) = \sum_i \pr(J=j | I=i) \pr(I=i) = x_j^* \sum_i \frac{a_{ij} b_i}{\sum_{j^\prime} a_{ij^\prime} x_{j^\prime}^*},
\eean
we consider an equation $\phi(x) = x$, where
$$\phi_j(x)=\sum_i \frac{a_{ij}b_i }{\sum_{j^\prime} a_{ij^\prime} x_{j^\prime}}$$
and $\phi(x)$ is the corresponding vector.
Then, it is not difficult to see that $x=x^*$ if and only if $\phi_j(x)=1$ for every $j$.
Thus, if the recursive update
$$x_{n+1}=x_n\circ \phi(x_n),$$
where $\circ$ denotes elementwise product, converges, it does so to $x^*$.

If $A,b,x \geq 0$ but some of $b,x$ and $a^{(j)}$'s are not probability vectors, we can easily rescale the problem as 
\be\label{eq:reformula}
	\widetilde A \widetilde x = \widetilde b
\ee
with the update \eqref{eq:bayes-update}, where $\widetilde A = (a_{ij}/a_{\cdot j})_{i,j\leq m}$, $\widetilde x = (x_j a_{\cdot j} / b_\cdot)_{j=1}^m$ and $\widetilde b = (b_i / b_\cdot)_{i=1}^m$.

Theorem \ref{thm:bayes-rate} assures the convergence of the update \eqref{eq:bayes-update} with geometric rate.
We need well-known bounds for probability metrics for the proof.
For $m$-dimensional vectors $u, v \geq 0$, define the \emph{Kullback--Leibler (KL) divergence} $D(u,v) = \sum_{i=1}^m u_i \log(u_i / v_i)$ and \emph{total variation} $V(u,v) = \sum_{i=1}^m |u_i - v_i|$.
In the definition of the KL divergence, we let $u_i \log(u_i/v_i)=0$ if $u_i=0$ and $D(u,v) = \infty$ if $u_i>0$ and $v_i=0$ for some $i$.
It is well-known that $D(u,v) \geq 0$ for every pair of probability vectors $(u,v)$, and equality holds if and only if $u=v$.
Let $\|\cdot\|_1$ denotes the $\ell_1$-operator norm (\ie\ maximum absolute column sum) of a matrix.

\begin{theorem}\label{thm:bayes-rate}
Assume that $A \geq 0$, $x^*, b > 0, x_0 > 0$ and $A$ is nonsingular.
Then, for $(x_n)$ defined by \eqref{eq:bayes-update}, there exists $N$ such that
$D(\widetilde x^*, \widetilde x_{n+1}) \leq (1-\delta) D(\widetilde x^*, \widetilde x_n)$ for all $n \geq N$, where $\widetilde x^* = (x^*_j a_{\cdot j}/b_\cdot)_{j=1}^m$ and $\widetilde x_n = (x_{n,j} a_{\cdot j}/b_\cdot)_{j=1}^m$ and
$$
	\delta = \frac{1}{3\|A^{-1}\|_1^2} \min_{1\leq j\leq m} \widetilde x^*_j.
$$
\end{theorem}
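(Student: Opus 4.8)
The plan is to interpret the recursion \eqref{eq:bayes-update} as an EM-type ascent on the KL divergence and show that $D(\widetilde x^*, \widetilde x_n)$ decreases with a quantifiable one-step gain. After the rescaling \eqref{eq:reformula}, we may work with $\widetilde A$ (column sums one), $\widetilde b$, $\widetilde x^*$ (a probability vector), and $\widetilde x_n$ (a probability vector, since $\sum_j \widetilde x_{n+1,j} = \sum_j \widetilde x_{n,j} \sum_i \widetilde a_{ij} \widetilde b_i / (\widetilde A \widetilde x_n)_i = \sum_i \widetilde b_i = 1$ by an interchange of sums). So without loss of generality assume $A$, $x^*$, $b$ are already normalized, and write $b_n = Ax_n$, $\phi_j(x_n) = \sum_i a_{ij} b_i / b_{n,i}$, so $x_{n+1,j} = x_{n,j}\phi_j(x_n)$. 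The first step is the standard EM/Jensen identity: for any two mixing vectors $x, x'$ with $Ax, Ax'$ on the simplex,
\[
D(x^*, x') - D(x^*, x) = \sum_j x^*_j \log\frac{x_j}{x'_j},
\]
and the "Q-function increase" bound. Concretely, using $\pr(J=j\mid I=i) = a_{ij} x^*_j / b_i$ from \eqref{eq:bayes-prob} and the analogous quantity $q^{(n)}_{ij} = a_{ij} x_{n,j}/b_{n,i}$ for the current iterate, one writes
\[
D(x^*, x_n) - D(x^*, x_{n+1}) \;=\; \sum_j x^*_j \log\frac{x_{n+1,j}}{x_{n,j}} \;=\; \sum_j x^*_j \log\phi_j(x_n),
\]
and the goal is to lower-bound $\sum_j x^*_j \log\phi_j(x_n)$ by $\delta\, D(x^*, x_n)$ for $n$ large.

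The second step is to relate $\sum_j x^*_j \log\phi_j(x_n)$ to a KL divergence in the "observation" coordinates. A Jensen argument (convexity of $-\log$, with weights $\pr(J=j\mid I=i)$) gives
\[
\sum_j x^*_j \log\phi_j(x_n) \;\geq\; \sum_i b_i \log\frac{b_i}{b_{n,i}} \;=\; D(b, b_n) \;=\; D(Ax^*, Ax_n),
\]
so the KL divergence is monotone nonincreasing and the per-step decrease is at least $D(Ax^*, Ax_n)$. Now I need the reverse-type comparison $D(Ax^*, Ax_n) \gtrsim \delta\, D(x^*, x_n)$. Here is where nonsingularity of $A$ enters: by Pinsker's inequality $D(Ax^*, Ax_n) \geq \tfrac12 V(Ax^*, Ax_n)^2 = \tfrac12 \|A(x^*-x_n)\|_1^2$, and then $\|x^* - x_n\|_1 = \|A^{-1}A(x^*-x_n)\|_1 \leq \|A^{-1}\|_1 \|A(x^*-x_n)\|_1$, so $D(Ax^*,Ax_n) \geq \tfrac{1}{2\|A^{-1}\|_1^2}\|x^*-x_n\|_1^2$. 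It remains to dominate $D(x^*, x_n)$ by a multiple of $\|x^*-x_n\|_1^2$. This is the standard "reverse Pinsker" bound, valid precisely when $x_n$ stays bounded away from $0$: $D(x^*, x_n) \leq \sum_j (x^*_j - x_{n,j})^2/x_{n,j} + \text{(higher order)} \lesssim \|x^*-x_n\|_1^2 / \min_j x_{n,j}$. Combining these, once $\min_j x_{n,j} \geq \tfrac{2}{3}\min_j x^*_j$ (say), one gets $D(Ax^*, Ax_n) \geq \delta\, D(x^*, x_n)$ with the stated $\delta$, and the bookkeeping of the constant $3$ comes from this last inequality.

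The main obstacle — and the reason the theorem is only asserted "for all $n \geq N$" rather than for all $n$ — is showing that $x_{n,j}$ is eventually bounded below by a positive constant (uniformly in $n$), which is exactly what the reverse-Pinsker step needs. I would argue this in two stages: first, the monotone decrease of $D(x^*, x_n)$ already established (which needs nothing about lower bounds, just $x_n > 0$, preserved by the multiplicative update since $a_{ij}, b_i \geq 0$ and $x_0 > 0$) forces $D(x^*, x_n)$ to converge; combined with $\sum_n D(Ax^*, Ax_n) < \infty$ and nonsingularity of $A$, this gives $x_n \to x^*$. Second, since $x^* > 0$ and $x_n \to x^*$, there is $N$ with $\min_j x_{n,j} \geq \tfrac23 \min_j x^*_j$ for $n \geq N$. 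Thus $N$ is not effective, but the geometric rate kicks in once the trajectory enters the neighborhood where the local strong-convexity-type bound holds. The remaining work is purely the routine constant-chasing in the reverse Pinsker inequality and assembling $\delta = \tfrac{1}{3\|A^{-1}\|_1^2}\min_j \widetilde x^*_j$.
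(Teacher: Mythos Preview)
Your proposal is correct and follows essentially the same route as the paper: the Jensen step yielding $D(x^*,x_{n+1})\le D(x^*,x_n)-D(b,b_n)$, convergence $x_n\to x^*$ via nonsingularity of $A$, then Pinsker combined with the reverse-Pinsker bound $D(x^*,x_n)\le V(x^*,x_n)^2/\min_j x_{n,j}$ to close the loop once $\min_j x_{n,j}\ge\tfrac23\min_j x^*_j$. One small slip: the Jensen weights in your second step are $a_{ij}=\pr(I=i\mid J=j)$ (the normalized columns of $A$), not $\pr(J=j\mid I=i)$; with that correction the inequality $\sum_j x^*_j\log\phi_j(x_n)\ge D(b,b_n)$ goes through exactly as you claim.
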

\begin{proof}
If some of $b, x^*$ and $a^{(j)}$'s are not probability vectors, we can reformulate the problem using \eqref{eq:reformula}.
Therefore, we may assume without loss of generality that $b, x^*$ and $a^{(j)}$'s are probability vectors.
For any $x_0 > 0$, it is easy to see that $x_n > 0$ and $\sum_{j=1}^m x_{n,j} = 1$ for every $n\geq 1$.
Thus, $b_n$ and $x_n$ are also probability vectors for every $n\geq 1$.
From \eqref{eq:bayes-update} we have 
\bean
	\log x_{n+1,j} = \log x_{n,j} + \log \sum_{i=1}^m \left( \frac{b_i}{b_{n,i}} a_{ij} \right)
	\geq \log x_{n,j} + \sum_{i=1}^m a_{ij} \log \left( \frac{b_i}{b_{n,i}} \right),
\eean
where the inequality holds by Jensen.
Therefore,
$$
	\sum_{j=1}^m x_j^* \log x_{n+1,j} \geq \sum_{j=1}^m x_j^* \log x_{n,j} + D(b, b_n).
$$
This implies that 
\be \label{eq:KL-inequality}
	D(x^*, x_{n+1}) \leq D(x^*, x_n) - D(b, b_n),
\ee
and $D(x^* x_n)$ converges, by the monotone convergence theorem.
Thus, $D(b, b_n) \rightarrow 0$, which in turn implies that $x_n \rightarrow x^*$.

Note that 
$$
	V(x^*, x_n) = V(A^{-1}b, A^{-1}b_n) \leq \|A^{-1}\|_1 V(b, b_n),
$$
where $\|\cdot\|_1$ denotes the $\ell_1$-operator norm (\ie maximum absolute column sum) of the matrix.
Therefore,
$$
	D(b, b_n) \geq \half V^2(b, b_n) \geq \frac{1}{2\|A^{-1}\|_1^2} V^2(x^*, x_n),
$$
where the first inequality holds by Pinsker's inequality (\cite{pinsker1964information, csiszar2011information}).
Since
\bean
	D(x^*, x_n) &=& \sum_{j=1}^m x^*_j \log \frac{x^*_j}{x_{n,j}} \leq \sum_{j=1}^m x^*_j \left( \frac{x^*_j}{x_{n,j}} - 1\right)
	= \sum_{j=1}^m \left(1 + \frac{x^*_j - x_{n,j}}{x_{n,j}} \right) (x^*_j - x_{n,j})
	\\
	&=& \sum_{j=1}^m \frac{(x^*_j - x_{n,j})^2}{x_{n,j}} 
	\leq \left(\sum_{j=1}^m \frac{|x^*_j - x_{n,j}|}{\sqrt{x_{n,j}}} \right)^2
	\leq V^2(x^*, x_n) \max_{1 \leq j \leq m} x_{n,j}^{-1}
\eean
and $x_n \rightarrow x^*$, we have $D(b, b_n) \geq \delta D(x^*, x_n)$ for all large enough $n$, where
$$
	\delta = \frac{1}{3\|A^{-1}\|_1^2} \min_{1\leq j\leq m} x^*_j.
$$
Therefore, by \eqref{eq:KL-inequality},
$$
	\delta D(x^*, x_n) \leq D(b, b_n) \leq D(x^*, x_n) - D(x^*, x_{n+1})
$$
for all large enough $n$.
It follows that $D(x^*, x_{n+1}) \leq (1-\delta) D(x^*, x_n)$ for all large $n$.
\end{proof}

\bigskip
Note that for any nonnegative vectors $p$ and $q$ with the same $\ell_1$-norm, the Kullback--Leibler  divergence and the Euclidean norm are related as
$$
	\frac{1}{\|p\|_1}\sum_{j} p_j\log (p_j/q_j) \geq \frac{\|p-q \|_1^2}{2 \|p\|_1^2}  \geq \frac{\|p-q \|_2^2}{2 \|p\|_1^2}.
$$
Thus, $\|x_n - x^*\|_2^2 \leq 2 \|x^*\|_1^2 D(\widetilde x^*, \widetilde x_n)$.

The key to the proof of Theorem \ref{thm:bayes-rate} is inequality \eqref{eq:KL-inequality}.
This inequality implies that the larger $D(b, b_n)$ is the larger we gain at the $n$th iteration.
It should be noted that $x^*, b > 0$ is essential for the convergence of the algorithm.
When $b_i \leq 0$ for some $i$, we can easily reformulate the problem as 
\be\label{eq:reformula2}
	A x_t = b_t,
\ee
where $x_t = x + t\bone_m, b_t = b + tA\bone_m, \bone_m = (1, \ldots, 1)^T$ and $t>0$ is a constant such that $b_t > 0$.
Note that $A \bone_m > 0$ because $A$ is nonsingular and nonnegative.
Note also that $A\geq 0$ and $b > 0$ does not imply that $x \geq 0$.
If $t$ is large enough, however, we have $x^* + t\bone_m > 0$, leading to Algorithm \ref{alg:bayes} which guarantees the convergence for any $A \geq 0$ and $b > 0$.
We call this algorithm as the \emph{nonnegative algorithm (NNA)}.
As seen in Section \ref{ssec:illustration}, $t$ can be chosen as a very large constant without being detrimental to the algorithm.

\begin{algorithm}
\caption{Nonnegative algorithm for $A \geq 0$ and $b > 0$} \label{alg:bayes}
\begin{algorithmic}[1]
\State {\bf Input}: $A, b, x_0, \epsilon_{\rm tol}$ and $t >0$
\State $b \gets b + t A\bone_m$
\State $x_0 \gets x_0 + t\bone_m$
\State $n \gets 0$
\While {$\|Ax_n - b\|_2 > \epsilon_{\rm tol}$ }
	\State $b_n \gets Ax_n$
	\State $c_n \gets b/b_n$ (componentwise division)
	\State $d_n = \widetilde{A}^T c_n$
	\State $x_{n+1} = d_n \circ x_n$ (componentwise multiplication)
	\State $n \gets n+1$
\EndWhile
\State $x_n \gets x_n - t\bone_m$
\State \Return $x_n$
\end{algorithmic}
\end{algorithm}

Here we consider the computational complexity of Algorithm \ref{alg:bayes}.
In \eqref{eq:bayes-update}, we first need to compute $b_n = Ax_n$, and then compute $c_n = b/b_n$, where $/$ represents componentwise division.
Finally, we compute $x_{n+1} = (\widetilde A^T c_n) \circ x_n$, where $\widetilde A = (a_{ij} / a_{\cdot j})_{i,j\leq m}$.
In summary, we need two matrix-vector multiplications and two vector-vector componentwise operations.
Assume that the sparsity structure of $A$ is known and $\cN_A \geq m$.
Then, the number of flops (floating-point operations; addition, subtraction, multiplication, or division) for matrix multiplication is less than $2 \cN_A$.
Also, for a vector-vector multiplication (or division), $2m$ flops are required.
Therefore, the total number of flops for one iteration of \eqref{eq:bayes-update} is less than $4(\cN_A + m)$.
We compare the number of flops with other algorithms in Section \ref{sec:comparison}.

We can apply Algorithm \ref{alg:bayes} for any linear system even when $A$ is not invertible or no solution exists.
For the remainder of this subsection, we assume that $A \in \bbR^{m_1\times m_2}$, $b \in \bbR^{m_2}$, $x \in \bbR^{m_1}$.

We first consider the case that a solution $x^*$ exists.
Since a solution may not be unique, it is not guaranteed that $x_n \rightarrow x^*$.
Theorem \ref{thm:consistent-system} assures the convergence of $b_n$ to $b$ with an upper bound of order $O(1/\epsilon)$ for the number of iterations to achieve $D(b_n, b) \leq \epsilon$.

\begin{theorem}\label{thm:consistent-system}
Assume that $A \geq 0$, $x^*, b > 0$ and $Ax^* = b$.
For any $x_0 > 0$, the sequence $(x_n)$ defined as \eqref{eq:bayes-update} satisfies $D(b, b_n) \rightarrow 0$.
In particular, for every $\epsilon > 0$ there exists $N \leq D(x^*, x_1) / \epsilon + 1$ such that $D(b_N, b) \leq \epsilon$.
\end{theorem}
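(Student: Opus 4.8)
The plan is to mimic the structure of the proof of Theorem~\ref{thm:bayes-rate}, stopping before the part where nonsingularity of $A$ is invoked. As in that proof, after rescaling via \eqref{eq:reformula} we may assume $b$, $x^*$ and the columns $a^{(j)}$ are probability vectors, so that $x_n$ and $b_n = Ax_n$ remain probability vectors for every $n \geq 1$. The Jensen step applied to \eqref{eq:bayes-update} gives, exactly as before,
$$
	\sum_{j} x^*_j \log x_{n+1,j} \;\geq\; \sum_{j} x^*_j \log x_{n,j} + D(b, b_n),
$$
from which the fundamental inequality \eqref{eq:KL-inequality}, namely $D(x^*, x_{n+1}) \leq D(x^*, x_n) - D(b, b_n)$, follows. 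Here it is important to note that this step only uses $Ax^* = b$ and nonnegativity of $A$; it does not use uniqueness of the solution, so it is valid in the present setting. Since $D(x^*, x_n)$ is nonnegative and nonincreasing, it converges, and hence $D(b, b_n) \to 0$; this yields the first claim.

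For the quantitative part, I would sum \eqref{eq:KL-inequality} over $n = 1, \ldots, N$, telescoping the right-hand side to obtain
$$
	\sum_{n=1}^{N} D(b, b_n) \;\leq\; D(x^*, x_1) - D(x^*, x_{N+1}) \;\leq\; D(x^*, x_1).
$$
Now suppose, for contradiction, that $D(b_n, b) > \epsilon$ for all $n = 1, \ldots, N$ with $N > D(x^*, x_1)/\epsilon$. Then the left-hand side exceeds $N\epsilon > D(x^*, x_1)$, a contradiction. Hence there must exist some $N \leq D(x^*, x_1)/\epsilon + 1$ with $D(b_N, b) \leq \epsilon$. (A cosmetic point: the theorem writes $D(b, b_n)$ in the first claim and $D(b_N, b)$ in the second; since both vectors are probability vectors and the monotonicity argument controls $D(b, b_n)$, one should either note that the bound is on $D(b, b_N)$, or invoke symmetry of the relevant bound — I would simply state and use $D(b, b_n)$ throughout.)

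The only real subtlety, and the step I would be most careful about, is the measure-theoretic bookkeeping around zeros: one must check that $D(x^*, x_n) < \infty$ for $n \geq 1$ (guaranteed by $x_n > 0$, which follows from $x_0 > 0$ and the form of \eqref{eq:bayes-update}) so that the telescoping is legitimate and no $\infty - \infty$ arises, and that $D(b, b_1)$ is finite (guaranteed by $b_1 = Ax_1 > 0$ since $x_1 > 0$ and each column of $A$ is nonzero). Everything else is a direct transcription of the first half of the proof of Theorem~\ref{thm:bayes-rate}, with the observation that nonsingularity of $A$ — used there only to pass from $D(b,b_n) \to 0$ to $x_n \to x^*$ and to control the rate via $\|A^{-1}\|_1$ — is simply dropped here, which is why we obtain convergence of $(b_n)$ rather than of $(x_n)$, and only an $O(1/\epsilon)$ iteration count rather than a geometric rate.
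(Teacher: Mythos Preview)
Your proposal is correct and follows essentially the same route as the paper's own proof: reduce to probability vectors, invoke the Jensen step to obtain \eqref{eq:KL-inequality}, conclude $D(b,b_n)\to 0$ from monotonicity of $D(x^*,x_n)$, and then telescope \eqref{eq:KL-inequality} to get the $O(1/\epsilon)$ iteration bound by contradiction. Your additional remarks on finiteness of the KL terms and on the $D(b,b_n)$ versus $D(b_N,b)$ asymmetry are valid observations that the paper leaves implicit.
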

\begin{proof}
As in the proof of Theorem \ref{thm:bayes-rate}, we may assume that $x^*, b$ and $a^{(j)}$, $1 \leq j \leq m_2$ are probability vectors without loss of generality.
Then, $b_n$ and $x_n$ are probability vectors for every $n \geq 1$, so the inequality \eqref{eq:KL-inequality} holds in the same way.
Thus, $D(x^*, x_n)$ converges by the monotone convergence theorem and
it follows that $D(b, b_n) \rightarrow 0$.

For a given $\epsilon > 0$, let $N$ be the largest integer less than or equal to $D(x^*, x_1) / \epsilon + 1$ and assume that $D(b, b_n) > \epsilon$ for every $n \leq N$.
Then, since 
$$
	0 \leq D(x^*, x_{N+1}) \leq D(x^*, x_1) - \sum_{n=1}^N D(b, b_n),
$$
using \eqref{eq:KL-inequality}, we have $N < D(x^*, x_1)/\epsilon$.
This makes a contradiction and completes the proof.
\end{proof}

Assume that the linear system \eqref{eq:ls} do not have a solution.
In this case, the iteration \eqref{eq:bayes-update} converges to a minimal KL divergence points as Theorem \ref{thm:am}.
For the proof, we view the iteration \eqref{eq:bayes-update} as an alternating minimization for which powerful tools have been developed in \cite{csiszar1984information} to study its convergence.

\begin{theorem} \label{thm:am}
Assume that $A \geq 0$ and $b > 0$.
For any $x_0 > 0$, the sequence $(x_n)$ defined as \eqref{eq:bayes-update} satisfies $\lim_n D(b, Ax_n) \downarrow \inf_x D(b, Ax)$, where $x$ ranges over every positive vector with $\sum_j x_j a_{\cdot j} = b_\cdot$.
\end{theorem}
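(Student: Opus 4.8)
The plan is to recognize the iteration \eqref{eq:bayes-update} as an instance of the alternating minimization (alternating information projection) procedure and to apply the convergence theorem of \cite{csiszar1984information} after checking its hypotheses in the present setting. As a first step I would rescale exactly as in the proof of Theorem \ref{thm:bayes-rate}, via \eqref{eq:reformula}, so that $b$ and every column $a^{(j)}$ of $A$ is a probability vector; then each $x_n$ with $n\geq 1$ is a probability vector, each $b_n = Ax_n$ is a probability vector, and the set over which the infimum is taken --- positive $x$ with $\sum_j x_j a_{\cdot j} = b_\cdot$ --- becomes the set of positive probability vectors. I would then pass to the product index set $\{1,\dots,m_1\}\times\{1,\dots,m_2\}$ and introduce $\cP$, the set of joint probability vectors $p=(p_{ij})$ whose first marginal is $b$, and $\cQ$, the set of joint probability vectors of the form $q_x=(x_j a_{ij})$ for a probability vector $x$. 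Splitting $D(p\|q_x)$ into the divergence of the first marginals plus the $b$-average over $i$ of the divergence of the conditionals $p(\cdot\mid i)$ against $q_x(\cdot\mid i)$ shows $\min_{p\in\cP}D(p\|q_x)=D(b,Ax)$, hence $\inf_{p\in\cP,\,q\in\cQ}D(p\|q)=\inf_x D(b,Ax)$, the quantity appearing in the statement.

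The second step is to identify the iteration with the alternating procedure between $\cP$ and $\cQ$. Writing $q_n:=q_{x_n}$, the information projection of $q_n$ onto $\cP$ is $p_{n+1}:=(b_i x_{n,j}a_{ij}/b_{n,i})$ (the ``E-step''), its second marginal is exactly the vector $x_{n+1}$ produced by \eqref{eq:bayes-update}, and $q_{x_{n+1}}$ is precisely $\argmin_{q\in\cQ}D(p_{n+1}\|q)$ (the ``M-step'', obtained by matching second marginals). Moreover $D(p_{n+1}\|q_n)=\min_{p\in\cP}D(p\|q_n)=D(b,b_n)$, so the sequence of interest is exactly the sequence of divergences along the alternating chain. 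The two nested projection inequalities $D(p_{n+1}\|q_n)\geq D(p_{n+1}\|q_{n+1})\geq D(p_{n+2}\|q_{n+1})$ then give immediately that $D(b,b_n)$ is nonincreasing, hence convergent to some $d_\infty\geq\inf_x D(b,Ax)$.

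For the reverse inequality $d_\infty\leq\inf_x D(b,Ax)$ I would verify the hypotheses of the convergence theorem of \cite{csiszar1984information}: the Pythagorean identity $D(p\|q)=D(p\|\Pi_\cP q)+D(\Pi_\cP q\|q)$ for every $p\in\cP$, $q\in\cQ$ (which holds because $\cP$ is a linear family), together with the companion identity $D(p\|q)=D(p\|\Pi_\cQ p)+D(\Pi_\cQ p\|q)$ for every $p\in\cP$, $q\in\cQ$ (which holds by a direct computation exploiting the product structure of $\cQ$: $D(p\|q_x)$ minus its minimum value --- attained when $x$ equals the second marginal of $p$ --- equals the divergence between $x$ and that second marginal). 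These supply the three-point and four-point (five-point) properties required by the theorem, whose conclusion then reads $D(p_{n+1}\|q_n)=D(b,b_n)\to\inf_{\cP\times\cQ}D(p\|q)=\inf_x D(b,Ax)$. I would also record that $b>0$ forces $b_n>0$, so every divergence above is finite, and that $\inf_{x>0}D(b,Ax)$ agrees with the infimum over the closed simplex by continuity of $x\mapsto D(b,Ax)$ at any $x$ with $Ax>0$.

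The main obstacle I anticipate is conceptual rather than computational: the minimizer of $D(b,Ax)$ can lie on the boundary of the simplex, with some coordinates equal to zero, so $(x_n)$ need not converge and there is no interior fixed point to exhibit. One is genuinely proving convergence of the divergence \emph{values} only, and a naive ``every cluster point is stationary, hence globally optimal'' argument fails, because a boundary cluster point can violate the stationarity condition in the vanishing coordinates --- precisely the situation the abstract alternating-minimization machinery is built to handle. A secondary technical point is confirming that the regularity assumptions of the cited theorem (finiteness and lower semicontinuity of $q\mapsto\min_{p\in\cP}D(p\|q)$) hold here, which is where the hypothesis $b>0$ enters.
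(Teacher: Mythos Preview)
Your proposal is correct and follows essentially the same route as the paper: both pass to joint distributions on the product index set, identify \eqref{eq:bayes-update} with the alternating $I$-projection between the linear family $\cP$ (first marginal equal to $b$) and the product family $\cQ$ (joint laws $a_{ij}x_j$), and then invoke the convergence theorem of \cite{csiszar1984information}. Your write-up is in fact more explicit than the paper's about verifying the Pythagorean identities that underlie the three- and four-point properties, whereas the paper simply notes convexity of $\cP$ and $\cQ$ and appeals directly to Theorem~3 of \cite{csiszar1984information}.
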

\begin{proof}
Without loss of generality, we may assume that $b$ and $a^{(j)}$, $1 \leq j \leq m_2$ are probability vectors.
Let $\cP$ and $\cQ$ be the set of every bivariate probability mass functions $(i,j) \mapsto p(i,j)$ and $(i,j) \mapsto q(i,j)$ such that $\sum_j p(i,j) = b_i$ and $q(i,j) = a_{ij} x_j$ for some probability vector $x$, respectively.
Then, it is obvious that $\cP$ and $\cQ$ are convex.
Let
\bean
	q_n(i,j) = x_{n,j} a_{ij}
	\quad {\rm and} \quad
	p_n(i,j) = \frac{b_i q_n(i,j)}{\sum_{j'} q_n(i,j')}.
\eean
Then, $D(p_n, q_n) = D(b, b_n) \leq D(p, q_n)$ for every $p \in \cP$, where the inequality holds because $b$ and $b_n$ are marginal probabilities of $p$ and $q_n$.
Thus, $p_n = \argmin_{p \in \cP} D(p, q_n)$.

For a probability vector $x$ let $q(i,j) = a_{ij} x_j$.
Then,
\bean
	D(p_n, q) = \sum_{i,j} p_n(i,j) \log \frac{p_n(i,j)}{q(i,j)} = C_1 - \sum_{i,j} p_n(i,j) \log x_j
	\\
	= C_2 - \sum_j p_n(\cdot, j) \log \frac{x_j}{p_n(\cdot,j)},
\eean
where $p_n(\cdot, j) = \sum_i p_n(i,j)$ and $C_j$'s are terms independent of $x$.
Since
\bean
	p_n(\cdot, j) = \sum_i \frac{b_i x_{n,j} a_{ij}}{\sum_{j'} x_{n,j'} a_{ij'}} 
	= x_{n,j} \sum_i a_{ij} \frac{b_i}{b_{n,i}} = x_{n+1,j},
\eean
we have $D(p_n, q) = C_2 + D(x_{n+1}, x)$.
It follows that $q_{n+1} = \argmin_{q\in\cQ} D(p_n, q)$.

In summary, the sequences $(p_n)$ and $(q_n)$ are obtained by alternating minimization.
By Theorem 3 of \cite{csiszar1984information}, $D(p_n, q_n) \downarrow \inf_{p\in\cP, q\in\cQ} D(p, q)$.
Note that when 
\bean
	q(i,j) = a_{ij} x_j
	\quad {\rm and} \quad
	p(i,j) = \frac{b_i q(i,j)}{\sum_{j'} q(i,j')},
\eean
we have $D(p,q) = D(b, Ax)$.
Therefore, $D(p_n, q_n) = D(b, b_n) \downarrow \inf_x D(b, Ax)$.
\end{proof}

\subsection{General linear systems}

For convenience, we only consider a square matrix $A$, but the approach introduced in this subsection can also be applied to any linear system.
The main idea is to embed the original system \eqref{eq:ls} into a larger nonnegative system, and then apply Algorithm \ref{alg:bayes}.
This kind of slack variable techniques are well-known in linear algebra and optimization.
The enlarged system should be minimal to reduce any additional computational burden.

As an illustrative example, consider the system of linear equations
\bean 
	\left.\begin{array}{ccccccc}
	a_{11} x_1 &-& a_{12} x_2 &+& a_{13} x_3 &=& b_1,
	\\
	a_{21} x_1 &+& a_{22} x_2 &-& a_{23} x_3 &=& b_2,
	\\
	a_{31} x_1 &+& a_{32} x_2 &+& a_{33} x_3 &=& b_3,
	\end{array}\right.
\eean
where $a_{ij} \geq 0$ for every $i$ and $j$, so $A$ has negative elements.
We consider two more equations
\bean
	x_2 + x_4 = 0 \quad {\rm and} \quad x_3 + x_5 = 0,
\eean
where each equation contains only two nonzero elements.
Then, it is easy to see that solving the linear system consisting of the above five equations is equivalent to solving the following five equations:
\be\label{eq:ls3}
	\left.\begin{array}{rrrrrrrrrrr}
	a_{11} x_1	& &				&+& a_{13} x_3	&+& a_{12} x_4	& &				&=& b_1,
	\\
	a_{21} x_1	&+& a_{22} x_2	& & 			& &				&+& a_{23} x_5	&=& b_2,
	\\
	a_{31} x_1	&+& a_{32} x_2	&+& a_{33} x_3	& &				& &				&=& b_3,
	\\
				& &	x_2			& &				&+&	x_4			& &				&=& 0,
	\\
				& &				& & x_3			& &				&+& x_5			&=& 0.
	\end{array}\right.
\ee
Let $Py = c$ be the matrix form of \eqref{eq:ls3}, then we have $P \geq 0$, so NNA can be applied.
This can be generalized as in the following theorem.

\begin{theorem} \label{thm:consistent}
For $A \in \bbR^{m\times m}$ and $b \in \bbR^m$, assume that $Ax^* = b$.
Then, there exists a linear system $Py = c$ with solution $y^*$, such that $P$ is a $(m+J)\times (m+J)$ matrix with $J \leq m$, $\cN_P = \cN_A + 2J$ and the first $m$ components of $y^*$ are equal to $x^*$.
\end{theorem}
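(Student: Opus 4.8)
The plan is to make the construction in the illustrative example systematic. The key observation is that negativity in $A$ can be localized: for each column $j$ that contains at least one negative entry, I introduce a single slack variable $y_{m+k}$ (indexed by a counter $k$ running over the "bad" columns) together with the equation $x_j + y_{m+k} = 0$. Wherever $a_{ij} < 0$ appears in the original system, I replace the term $a_{ij} x_j$ by $(-a_{ij}) y_{m+k} = |a_{ij}| y_{m+k}$, which is legitimate precisely because the new equation forces $y_{m+k} = -x_j$ at any solution. Let $J$ be the number of columns of $A$ containing a negative entry; then $J \le m$, and I append $J$ new variables and $J$ new equations, giving the $(m+J)\times(m+J)$ system $Py = c$.

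Next I would verify the three bookkeeping claims. For the sparsity count: each new equation $x_j + y_{m+k}=0$ contributes exactly two nonzero entries, so the $J$ new rows contribute $2J$ nonzeros; meanwhile the original $m$ rows keep the same number of nonzero entries, since replacing $a_{ij}x_j$ by $|a_{ij}| y_{m+k}$ moves a nonzero from one column to another but does not change its count, and a zero $a_{ij}$ stays a structural zero. Hence $\cN_P = \cN_A + 2J$. For nonnegativity: every original positive entry is untouched, every original negative entry $a_{ij}$ becomes $|a_{ij}| \ge 0$ in column $m+k$, and the new rows have entries $1,1$; hence $P \ge 0$. For the solution: given $x^*$ with $Ax^* = b$, set $y^*_i = x^*_i$ for $i \le m$ and $y^*_{m+k} = -x^*_{j}$ for the $k$-th bad column $j$. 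The last $J$ equations hold by construction, and each of the first $m$ equations holds because in row $i$ the substituted term $|a_{ij}| y^*_{m+k} = |a_{ij}|(-x^*_j) = a_{ij} x^*_j$ reproduces exactly the term it replaced; so $Py^* = c$ with $c = (b^T, 0, \ldots, 0)^T$, and the first $m$ components of $y^*$ equal $x^*$.

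The one genuine point that needs care — the main obstacle — is that the statement asks for $P$ to be \emph{square} of size exactly $(m+J)\times(m+J)$, and I have added $J$ rows and $J$ columns, so this is automatic; but it does not by itself claim $P$ is nonsingular, only that a solution $y^*$ exists, which is all the construction needs (and indeed $P$ may be singular even when $A$ is not, e.g.\ if the same column is referenced with conflicting sign patterns — though in fact one checks $\det P = \pm\det A$ by expanding along the slack columns, so nonsingularity of $A$ is inherited). I should also note the edge case: if $A \ge 0$ already, then $J = 0$ and $P = A$, $c = b$, and the statement is vacuous, consistent with "$J \le m$". Finally I would remark that this is exactly the procedure illustrated by passing from the $3\times 3$ example to the $5\times 5$ system \eqref{eq:ls3}, where the two bad columns (columns $2$ and $3$) each acquired one slack variable, so $J = 2$ and $\cN_P = \cN_A + 4$.
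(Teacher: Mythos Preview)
Your proposal is correct and follows essentially the same construction as the paper: the paper writes it compactly in block form as
\[
P = \begin{pmatrix} A^+ & \widetilde A^- \\ D & I_J \end{pmatrix},\qquad c = \begin{pmatrix} b \\ {\bf 0}_J \end{pmatrix},\qquad y^* = \begin{pmatrix} x^* \\ -x^*_\cJ \end{pmatrix},
\]
where $A = A^+ - A^-$ is the positive/negative part decomposition, $\widetilde A^-$ collects the nonzero columns of $A^-$, and $D$ is the $J\times m$ selection matrix for $\cJ$, but this is exactly your column-by-column slack construction. Your extra observations (the Schur-complement identity giving $\det P = \det A$, and the $J=0$ edge case) are correct and go slightly beyond what the paper records.
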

\begin{proof}
Let $\cJ = \{j \leq m: a_{ij} < 0 \;\textrm{for some $i\leq m$}\}$ and $J$ be the cardinality of $\cJ$.
If $J > 0$, we can write $\cJ = \{j_1, \ldots, j_J\}$ with $j_1 < \cdots < j_J$.
Let $A^+ = (\max\{a_{ij}, 0\})_{i,j\leq m}$, $A^- = -(\min\{a_{ij}, 0\})_{i,j\leq m}$ and $\widetilde A^-$ be the $m\times J$ sub-matrix of $A^-$ consisting of all nonzero columns.
Let $D = (d_{ij})$ be the $J \times m$ matrix defined as
$$
	d_{ij} = \left\{ \begin{array}{cc} 1 & \textrm{if $j=j_i$} \\ 0 & \textrm{otherwise.} \end{array}\right.
$$
Define a $(m+J) \times (m+J)$ matrix $P$ as
$$
	P = 
	\left(\begin{matrix}
		A^+ 	& \widetilde A^-
		\\
		D		& I_J
	\end{matrix}\right),
$$
where $I_J$ denotes the $J\times J$ identity matrices.
It is obvious that $\cN_P = \cN_A + 2J$.
Consider the linear system 
\be\label{eq:ls2}
	Py = c,
\ee
where $c = (b^T, {\bf 0}_J^T)^T$ and ${\bf 0}_J \in\bbR^J$ is the zero vector.
Then it is easy to see that $y^* = ((x^*)^T, -(x^*_\cJ)^T)^T$ is a solution of \eqref{eq:ls2}, where $x^*_\cJ = (x^*_j)_{j\in\cJ}$.
\end{proof}

\bigskip
Hence, from the proof, we see that both $P$ and $c$ are easy to find.
The corresponding algorithm is summarized in Algorithm \ref{alg:general}, where $J=m$ is assumed for simplification.

\begin{algorithm}
\caption{General algorithm} \label{alg:general}
\begin{algorithmic}[1]
\State {\bf Input}: $A, b, x_0, \epsilon_{\rm tol}$ and $t >0$
\State $P \gets {\bf 0}_{2m\times 2m}$
\For {$i=1, \ldots, m$} {}
	\For {$j=1, \ldots, m$} {}
		\If {$a_{ij} > 0$} {$p_{ij} = a_{ij}$}
		\ElsIf {$a_{ij} < 0$} $p_{i,m+j} = -a_{ij}$
		\EndIf
	\EndFor
	\State $p_{m+i,i} \gets 1$
	\State $p_{m+i, m+i} \gets 1$
\EndFor
\State $c \gets (b^T, {\bf 0}_m^T)^T$
\State $y_0 \gets (x_0^T, -x_0^T)^T$
\State $y \gets$ NNA($P, c, y_0, \epsilon_{\rm tol}, t$) (Algorithm \ref{alg:bayes})
\State \Return $(y_1, \ldots, y_m)^T$
\end{algorithmic}
\end{algorithm}

\subsection{Illustrations} \label{ssec:illustration}

Firstly, we illustrate the effect of $t$ with a small dimensional example.
We set $m=10$ and generate a matrix $A$ by sampling $a_{ij}$ independently from the uniform distribution on the unit interval $[0,1]$.
Hence, with probability one, $A$ will be invertible.
Each component $b_i$ is also generated from the uniform distribution.
We then ran 100 iterations of Algorithm \ref{alg:bayes} with $t=10, 100$ and $1000$.
At each step, we obtain $\|Ax_n - b\|_2$, which are drawn in Figure \ref{fig:t-effect} with natural logarithmic scale.
The results are robust to the value of $t$, which is a common phenomenon with all our experiments.
Therefore, we can choose $t$ sufficiently large in practice.

\begin{figure} \bc
\includegraphics[width=100 mm, height=100 mm]%
  {./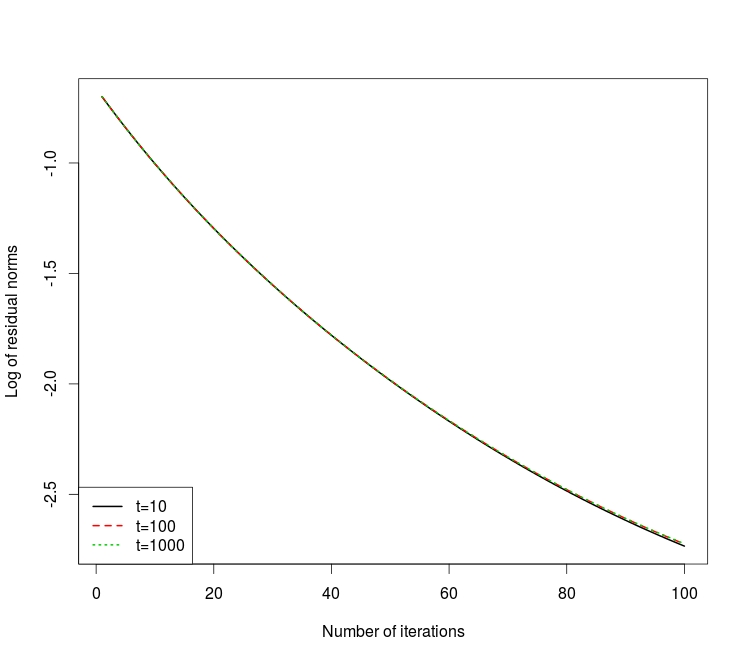}
\caption{The effect of the value of $t$. Residual norms $\|Ax_n - b\|_2$ are plotted on log scale for $t=10$ (black solid), $100$ (red dashed) and $1000$ (green dotted). The three lines are almost overlapped.}
\label{fig:t-effect}
\ec \end{figure}

We next consider a large sparse random matrix.
We set $m=1000$ and randomly generated $5m$ nonzero nondiagonal elements from the uniform distribution on $[0,1]$.
Each diagonal element of $A$ is generated from the uniform distribution on the interval $[0,100]$.
We compare the NNA algorithm ($t=0$) with GMRES, applied to the original system, and normal equation given in \eqref{eq:normal}.
The result is given in Figure \ref{fig:comparison}, showing the better convergence for the NNA compared to GMRES.

\begin{figure} \bc
\includegraphics[width=100 mm, height=100 mm]%
  {./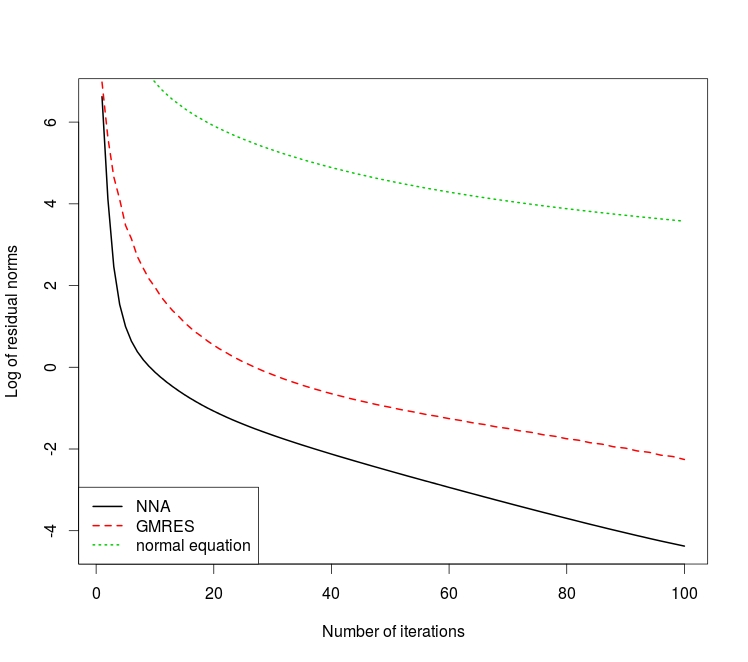}
\caption{Residual norms $\|A_s x_n - b\|_2$ are plotted in log scale for NNA (black solid), GMRES applied to the original system (red dashed) and GMRES to the normal equation (green dotted).}
\label{fig:comparison}
\ec \end{figure}

Finally, we consider a real example, known as GRE-1107, which can be found in \cite{matrixmarket}.
It is a nonsymmetric indefinite matrix, and the number of non-zero components is 5664 with $m=1107$.
NNA converges quickly without preconditioning while preconditioned (by the incomplete LU decomposition) GMRES and BI-CGSTAB \cite{van1992bi} fail to converge.
Residual norms are plotted in Figure \ref{fig:real}.

\begin{figure} \bc
\includegraphics[width=100 mm, height=100 mm]%
  {./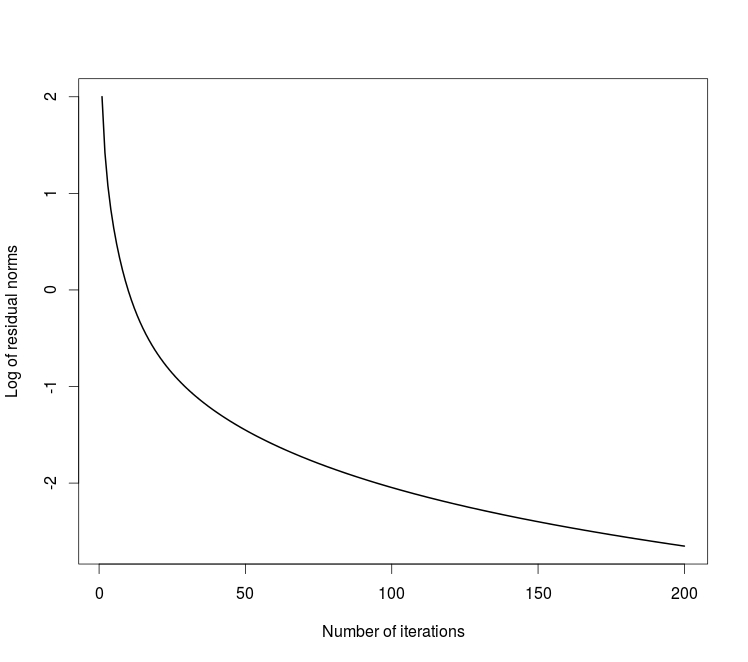}
\caption{Residual plot of NNA for the matrix GRE-1107.}
\label{fig:real}
\ec \end{figure}

In the next section we compare our algorithm with those mentioned in Section \ref{sec:intro}.
We do this under the conditions of guaranteed convergence, which impose a restriction on all algorithms, save our own.
In particular, we will compare flops per iteration and convergence rate.

\section{Comparison with other iterative methods} \label{sec:comparison}

As mentioned in the introduction, there is a vast amount of literature for solving sparse linear systems, but difficult to study theoretically.
As a consequence, only a few algorithms possess convergence properties, but even then under restrictive conditions.
In this section, we compare widely used iterative methods and their convergence properties.
Under the assumption that the arithmetic is exact, the result of this section is summarized in Table \ref{tab:comparison}.
Note that the computational complexities of MINRES$(k)$ and GMRES$(k)$ are not directly comparable to those of other methods because they depend on the number of step size $k$.

\begin{table}
\caption{Comparison of iterative methods with known convergence properties. The second column represents sufficient conditions guaranteeing the convergence: DD (diagonally dominant), PD (positive definite) and SPD (symmetric and PD).}
\label{tab:comparison}	
\bc
\begin{tabular}{c|ccc}
	\hline
	& Conditions for	& \multirow{2}{*}{FLOPs}		& \multirow{2}{*}{Storage} \\
	& convergence 		& 		&  \\
	\hline
	\multirow{2}{*}{NNA}		&  \multirow{2}{*}{-}		& \multirow{2}{*}{$O(\cN_A + m)$}	& \multirow{2}{*}{$O(\cN_A + m)$} \\
				& \\
	\hline
	\multirow{2}{*}{Jacobi}		& \multirow{2}{*}{DD}	& \multirow{2}{*}{$O(\cN_A + m)$}	& \multirow{2}{*}{$O(\cN_A + m)$} \\
								& &&\\
	\hline
	Gauss-		& \multirow{2}{*}{DD or SPD}	& \multirow{2}{*}{$O(\cN_A + m)$}	& \multirow{2}{*}{$O(\cN_A + m)$} \\
	Seidel		&&&\\
	\hline
	Conjugate	& \multirow{2}{*}{SPD} & \multirow{2}{*}{$O(\cN_A + m)$}	& \multirow{2}{*}{$O(\cN_A + m)$} \\
	gradient	&&&  \\
	\hline
	\multirow{2}{*}{MINRES$(k)$}		& \multirow{2}{*}{symmetric} & \multirow{2}{*}{$O(k \cN_A + k m)$}	& \multirow{2}{*}{$O(\cN_A + k m)$} \\
									&&& \\
	\hline
	\multirow{2}{*}{GMRES$(k)$}		& \multirow{2}{*}{PD} & \multirow{2}{*}{$O(k \cN_A + k^2 m)$}	& \multirow{2}{*}{$O(\cN_A + km)$} \\
									&&& \\
	\hline
\end{tabular}
\ec
\end{table}

\subsection{Basic methods: Jacobi and Gauss--Seidel}

It is easy to see that the numbers of flops for each step of the Jacobi and Gauss--Seidel methods are $2 (\cN_A + m)$.
Also, required storages is $\cN_A + 3m$ for Jacobi and $\cN_A + 2m$ for Gauss--Seidel.
Let $L, U$ and $D$ be the lower, upper triangular and diagonal parts of $A = L + U + D$, respectively.
Then, the Jacobi and Gauss--Seidel methods can be expressed in matrix forms as
$$
	x_{n+1} = D^{-1} \{ b - (L+U) x_n\} \quad \textrm{and} \quad
	x_{n+1} = (L+D)^{-1} (b-U x_n),
$$
respectively.
It is well-known (see Chapter 4 of \cite{saad2003iterative}) that updates of the form $x_{n+1} = G x_n + f$ for some $G\in\bbR^{m\times m}$ and $f\in\bbR^m$ assures convergence if $\rho(G) < 1$, where $\rho(G)$ is the spectral radius of $G$.
More specifically, $x_n$ obtained by the Jacobi and Gauss--Seidel methods satisfy 
$$
	\|x_n - x^*\|_2 \leq \{\rho(D^{-1} (L+U))\}^n \|x_0 - x^*\|_2
$$
and
$$
	\|x_n - x^*\|_2 \leq \{\rho((L+D)^{-1} U)\}^n \|x_0 - x^*\|_2,
$$
respectively.
It follows that $x_n \rightarrow x^*$ if the corresponding spectral radius is strictly smaller than 1.
For both methods, $x_n$ sometimes converges to $x^*$ even when the spectral radius is larger than 1.

It can be expensive to compute the spectral radius of a given large matrix.
Fortunately, there are well-known sufficient conditions which are easy to check.
A matrix $A\in\bbR^{m\times m}$ is called \emph{diagonally dominant} if $|a_{jj}| \geq \sum_{i\neq j} a_{ji}$ for every $i \geq 1$, and \emph{strictly diagonally dominant} if every inequality is strict.
A matrix $A$ is called \emph{irreducible} if the graph representation of $A$ is irreducible,
and \emph{irreducibly diagonally dominant} if it is irreducible, diagonally dominant and $|a_{jj}| > \sum_{i\neq j} |a_{ji}|$ for some $j \geq 1$.
If $A$ is strictly or irreducibly diagonally dominant, then $\rho(D^{-1} (L+U)) < 1$ and
$\rho((L+D)^{-1} U) < 1$; see Chapter 4 of \cite{saad2003iterative}.
Another sufficient condition for $\rho((L+D)^{-1} U) < 1$ is that $A$ is symmetric and positive definite; see \cite{golub2012matrix}.

\subsection{Conjugate gradient method}

It is easy to see that the number of flops in steps \ref{step:cg1}--\ref{step:cg2} of Algorithm \ref{alg:cg} is $2 \cN_A + 12m$, and the required storage is $\cN_A + 4m$.
Let $x_n$ be the approximate solution obtained at the $n$th step of the conjugate gradient method.
If the arithmetic is exact, we have $x_m = x^*$, so the exact solution can be found in $m$ steps.
If $m$ is prohibitively large, let $\lambda_{\max}(A)$ and $\lambda_{\min}(A)$ be the maximum and minimum eigenvalues of $A$, respectively.
Then, an upper bound on the conjugate norm between $x_n$ and $x^*$ is given as
$$
	(x_n - x^*)^T A (x_n - x^*) \leq 4 \left( \frac{\sqrt{\kappa}-1}{\sqrt{\kappa}+1}\right)^{2n} (x_0 - x^*)^T A (x_0 - x^*),
$$
where $\kappa = \lambda_{\max}(A) / \lambda_{\min}(A)$ (Chapter 6 of \cite{saad2003iterative}).
In practice, the improvement is typically linear in the step size; see \cite{liesen2004convergence}.

\subsection{MINRES and GMRES}

Ignoring the computational complexity of step \ref{step:gmres6}, that is relatively small for $k \ll m$, the numbers of flops required for steps \ref{step:gmres1}, \ref{step:gmres2}, \ref{step:gmres3}, \ref{step:gmres4}, \ref{step:gmres5} and \ref{step:gmres7} of Algorithm \ref{alg:gmres} are $2N_A, 2m, 2m, 2m, m$ and $(2k+1)m$, respectively.
Thus, the number of flops is $2k \cN_A + (2k^2 + 7k +1)m$.
Since we only need to save $A$, the orthonormal matrix $V_k \in \bbR^{m\times k}$, the approximate solution and vector for $A v_i$, the required storage is $\cN_A + (k+2)m$.
Here, storage for the Hassenberg matrix $H_k$ is ignored because $k$ is relatively small.
For the Lanczos algorithm (Algorithm \ref{alg:lanczos}), it is not difficult to see that the number of flops is $k(2\cN_A + 9m)$.

In general, Algorithm \ref{alg:gmres} does not guarantee convergence unless $k=m$.
In particular, it is shown in \cite{greenbaum1996any} that for any decreasing sequence $\epsilon_0 > \epsilon_1 > \cdots > \epsilon_m = 0$, there exists a matrix $A \in \bbR^{m\times m}$ and vectors $b, x_0 \in \bbR^m$ such that $\|G_k(x_0)\|_2 = \epsilon_k$.
Define $(x_n)$ as $x_{n+1} = G_k(x_n)$, a sequence generated by the restarted GMRES.
Then, if $A$ is positive definite, $x_n$ converges for any $k \geq 1$; see \cite{eisenstat1983variational}.
In particular, the rate is given by
$$
	\|Ax_n - b\|_2^2 \leq \left\{ 1 - \frac{\lambda_{\min}^2( (A+A^T)/2)}{\lambda_{\max} (A^T A)} \right\}^{nk} \|A x_0 - b\|_2^2.
$$
Some other convergence criteria of GMRES can be found in \cite{chronopoulos1991s}.
Also, more general upper bounds for residual norms, but not guaranteeing convergence, can be found in \cite{liesen2004convergence} and \cite{saad2003iterative}.

If $A$ is symmetric (not necessarily positive definite),
$$
	\|Ax_n - b\|_2^2 \leq \left\{ 1 - \frac{\lambda_{\min}^2(A^2)}{\lambda_{\max}(A^4)} \right\}^n \|Ax_0 - b\|_2^2
$$
for every $k \geq 2$; see \cite{chronopoulos1991s}, assuring the convergence of restarted MINRES.
Under a certain condition on the spectrum of $A$, a different type of upper bound can be found in \cite{liesen2004convergence}.

\subsection{$s$-step methods}

A number of $s$-step methods and their convergence properties are studied in \cite{chronopoulos1991s}.
In particular, it is shown that $s$-step generalized conjugate residual, Orthomin$(k)$ and minimal residual methods converge for all positive definite and some indefinite matrices.
Here, $s$-step minimal residual method is mathematically equivalent to GMRES$(s)$.
However, it is not easy in practice to check conditions for convergence of indefinite matrices.
Furthermore, computational costs for $s$-step methods can be expensive because they require more matrix-vector multiplications in each step.



\section{Discussion}
\label{sec:discussion}

The main contribution of the paper is to describe an algorithm which guarantees convergence for indefinite linear systems of equations.
The key idea is that arbitrary systems  can be embedded within a nonnegative system.
Other algorithms, such as CG and GMRES$(k)$, guarantee convergence under certain conditions, but it is difficult in general to transform an arbitrary system into a guaranteed convergent one for them.

Finally, we could do the updates using parallel computing which would provide faster convergence times.

\section*{Acknowledgement}
The second author is partially supported by NSF grant DMS No.\ 1612891.

\bibliographystyle{apalike}
\bibliography{bibliography}   

\end{document}